\newtheorem{thm}{Theorem}
\newtheorem{prop}{Proposition}[section]
\newtheorem{lemma}[prop]{Lemma}
\theoremstyle{definition}
\newtheorem{remark}[prop]{Remark}
\newcommand{\rr}{\mathbb{R}}
\newcommand{\nn}{\mathbb{N}}
\newcommand{\zz}{\mathbb{Z}}
\newcommand{\qq}{\mathbb{Q}}
\newcommand{\hh}{\mathbb{H}}
\newcommand{\ff}{\mathbb{F}}
\newcommand{\kk}{\mathbb{K}}
\newcommand{\Fp}{ \mathbb{F}_{>0}}
\title[An incomplete real tree with complete segments]{An incomplete real tree \\ with complete segments}
\author{Raphael Appenzeller, Luca De Rosa, Xenia Flamm, Victor Jaeck}
\date{\today}      % Activate to display a given date or no date
\address{Department of Mathematics, ETH Z\"{u}rich, Switzerland}
\email{raphael.appenzeller@math.ethz.ch}
\address{Department of Mathematics, ETH Z\"{u}rich, Switzerland}
\email{luca.derosa@math.ethz.ch}
\address{Department of Mathematics, ETH Z\"{u}rich, Switzerland}
\email{xenia.flamm@math.ethz.ch}
\address{Department of Mathematics, ETH Z\"{u}rich, Switzerland}
\email{victor.jaeck@math.ethz.ch}
\def\subjclassname{\textup{2020} Mathematics Subject Classification}
\let\csname subjclassname@1991\endcsname=\subjclassname
\subjclass{
51M10, %Hyperbolic and elliptic geometries (general) and generalizations
54E50, %Complete metric spaces
20E08%Groups acting on trees 
}
\keywords{
$\Lambda$-trees, completions, non-Archimedean ordered fields}
\begin{document}

\begin{abstract}
Let $\ff$ be the field of real Puiseux series and $\mathcal{T}_\ff$ the $\qq$--tree defined by Brumfiel in \cite{Brumfiel_TreeNonArchimedeanHyperbolicPlane}.
We show that completing all the segments of $\mathcal{T}_\ff$ does not result in a complete metric space.
\end{abstract}

\maketitle

\section{Introduction}
Real trees, or more generally $\Lambda$--trees, appear in various ways in the study of degenerations of isotopy classes of marked hyperbolic structures on surfaces, see for example \cite{Brumfiel_RSCTeichmuellerSpace, MorganShalen_ValuationsTressDegenerationsHyperbolicStructuresI, MorganShalen_FreeActionsRealTrees}, and more recently \cite{BurgerIozziParreauPozzetti_PositiveCrossratiosBarycentersTrees}.
If $\Lambda$ is a dense subgroup of $(\rr,+)$
and $\mathcal{T}$ is a $\Lambda$--tree, we can define the real tree
\[\mathcal{T}^{\operatorname{sc}} := \bigcup _{s \text{ segment in } \mathcal{T}} \overline{s},\]
%defined as the union of the metric completions $\overline{s}$ of the segments $s$ in $\mathcal{T}$.
where $\overline{s}$ is the metric completion of the segment $s$ in $\mathcal{T}$.
Then the $\Lambda$--tree $\mathcal{T}$ isometrically embeds in $\mathcal{T}^{\operatorname{sc}}$, which is an $\rr$--tree.
The completeness of $\rr$--trees has been addressed in \cite{ChiswellMuellerSchlagePuchta_CompactnessRealTrees}.

Let now $\kk$ be a non-Archimedean valued real closed field with value group $\Lambda < \rr$, and $\mathcal{T}_\kk$ its associated $\Lambda$--tree defined as in \cite{Brumfiel_TreeNonArchimedeanHyperbolicPlane}, see Section \ref{section:prelim}.
We call $\mathcal{T}_\kk^{\operatorname{sc}}$ the \emph{segment completion tree} associated to $\kk$.
We show that the latter is in general not a complete metric space.
\begin{thm}\label{thm:mainthm}
Let $\ff$ be the field of real Puiseux series with $\qq$--valuation, and $\mathcal{T}_\ff$ its associated $\qq$--tree.
Then $\mathcal{T}_\ff^{\operatorname{sc}}$ is not metrically complete. 
\end{thm}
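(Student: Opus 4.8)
The plan is to work inside the metric completion $\overline{\mathcal{T}_\ff}$, which is an $\rr$--tree containing $\mathcal{T}_\ff$ isometrically. In a $\qq$--tree every segment $[x,y]$ with $x,y\in\mathcal{T}_\ff$ is isometric to $[0,d(x,y)]\cap\qq$, hence dense in the geodesic $[x,y]$ of $\overline{\mathcal{T}_\ff}$, so its completion $\overline{[x,y]}$ is exactly that geodesic; therefore
\[ \mathcal{T}_\ff^{\operatorname{sc}} \;=\; \bigcup_{x,y\in\mathcal{T}_\ff}[x,y]_{\overline{\mathcal{T}_\ff}} \;=\; \bigl\{\,p\in\overline{\mathcal{T}_\ff} \ :\ p \text{ lies on a geodesic between two points of } \mathcal{T}_\ff\,\bigr\}. \]
By the elementary description of geodesics in an $\rr$--tree, $p$ lies between two points of $\mathcal{T}_\ff$ if and only if $\mathcal{T}_\ff$ meets at least two connected components of $\overline{\mathcal{T}_\ff}\setminus\{p\}$. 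Hence it suffices to exhibit a Cauchy sequence in $\mathcal{T}_\ff$ whose limit $p\in\overline{\mathcal{T}_\ff}$ satisfies $p\notin\mathcal{T}_\ff$ and is \emph{approached from a single direction}, i.e.\ all of $\mathcal{T}_\ff$ lies in one component of $\overline{\mathcal{T}_\ff}\setminus\{p\}$.

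Such a point comes from the failure of spherical completeness of $\ff$. We use the identification of $\mathcal{T}_\ff$ from Section~\ref{section:prelim} with (a tree isometric to) the tree of closed balls of $\ff$, in which the balls $\{B(a,\delta)\}_\delta$ centred at a fixed $a\in\ff$ form a line parametrised isometrically by the radius $\delta\in\qq$. Put
\[ a_n \;=\; \sum_{k=1}^{n} t^{\,1-2^{-k}} \ \in\ \rr((t^{1/2^{n}}))\subseteq\ff, \qquad \gamma_n \;=\; v(a_{n+1}-a_n) \;=\; 1-2^{-(n+1)} . \]
The $\gamma_n$ strictly increase to $\gamma_\infty=1\in\qq$ and $v(a_m-a_n)=\gamma_n$ for $m>n$, so the balls $B_n:=B(a_n,\gamma_n)$ are strictly nested; let $p_n\in\mathcal{T}_\ff$ be the corresponding point. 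The $p_n$ lie on a common geodesic ray and the radii $\gamma_n$ converge, so $(p_n)_n$ is Cauchy; set $p:=\lim_n p_n\in\overline{\mathcal{T}_\ff}$.

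The crucial point is that the sequence $(a_n)_n$ has no pseudo-limit in $\ff$: if $b\in\ff$ satisfied $v(b-a_n)\ge\gamma_n$ for all $n$, then applying the ultrametric inequality to $b-a_n=(b-a_{n+1})+(a_{n+1}-a_n)$ (whose two summands have distinct valuations) gives $v(b-a_n)=\gamma_n$ for every $n$; since $a_n$ only involves exponents in $\tfrac{1}{2^{n}}\zz$, this forces $b$ to contain monomials $t^{\,1-2^{-(n+1)}}$ whose denominators $2^{n+1}$ are unbounded, contradicting $b\in\bigcup_N\rr((t^{1/N}))$. Two consequences follow: first, no ball of $\ff$ is contained in all the $B_n$, so $p\notin\mathcal{T}_\ff$; second, the nearest-point projection onto the ray $[p_1,p)$ of any point of $\mathcal{T}_\ff$ is a point of that ray \emph{strictly before} $p$, since projecting exactly onto $p$ would mean the ball in question refines every $B_n$ and hence has a pseudo-limit as centre. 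Consequently every point of $\mathcal{T}_\ff$ is joined to $p_1$ by a geodesic avoiding $p$, so $\mathcal{T}_\ff$ lies in the single component of $\overline{\mathcal{T}_\ff}\setminus\{p\}$ that contains the ray. By the criterion of the first paragraph $p\notin\mathcal{T}_\ff^{\operatorname{sc}}$, so $(p_n)_n$ is a Cauchy sequence in $\mathcal{T}_\ff^{\operatorname{sc}}$ with no limit, proving that $\mathcal{T}_\ff^{\operatorname{sc}}$ is not complete. I expect the main work to lie in making this last paragraph rigorous within the ball model — describing the components of $\overline{\mathcal{T}_\ff}\setminus\{p\}$ through projection behaviour onto the ray and checking compatibility with the metric of Section~\ref{section:prelim} — whereas the reduction in the first paragraph and the non-spherical-completeness computation are routine.
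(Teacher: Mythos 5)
Your proposal is correct in substance, but it packages the argument quite differently from the paper, so let me compare. The paper never passes to the metric completion $\overline{\mathcal{T}_\ff}$ or to the ball model: it works entirely in the half-plane $\hh_\ff$ with the explicit distance formula of Remark~\ref{rem:comp_dist}. Its sequence is the mirror image of yours ($p_n=a_n+iX^{t_n}$ with $t_n=-1+2^{-n}$ and $X$ infinitely large, versus your exponents $1-2^{-k}$ in a variable $t$), and the non-convergence is proved by contradiction: a hypothetical limit lies in the completion of some segment, every segment is contained in the union of two ``vertical'' segments via Lemma~\ref{lem:tree}, a careful two-case argument (Step~2 of Proposition~\ref{prop:main}) shows that the whole tail $\pi(p_n)$ must then lie on a single vertical line $a+i\ff_{>0}$, and finally $\log|a-a_n|\leq t_n$ forces the Puiseux series of $a$ to contain monomials with unbounded denominators. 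Your route instead characterises $\mathcal{T}_\ff^{\operatorname{sc}}$ inside $\overline{\mathcal{T}_\ff}$ as the set of points lying on a geodesic between two points of $\mathcal{T}_\ff$, and shows the limit $p$ is approached from a single direction because a point projecting onto $p$ would produce a pseudo-limit of $(a_n)$. The two arguments are dual descriptions of the same mechanism --- your ``projection onto the ray hits $p$'' is exactly the paper's ``$\pi(p_n)$ lies on $\pi(\ell)$ for all $n$'' --- and both bottom out in the identical arithmetic fact that the exponents $1-2^{-k}$ have unbounded denominators, i.e.\ the failure of spherical completeness. What your version buys is a cleaner conceptual statement (the limit is a one-ended branch point of the completion, so it cannot lie on any completed segment); what it costs is two inputs the paper does not provide and that you correctly flag as the remaining work: (i) the isometric identification of $\mathcal{T}_\ff$ with the tree of closed balls of radius in $\qq$, which must be extracted from Remark~\ref{rem:comp_dist} (the fibre of $\pi$ over $x+iy$ is determined by $\log y$ and the ball $\{x' : -\log|x-x'|\geq -\log y\}$), and (ii) the fact that the metric completion of a $\qq$--tree is an $\rr$--tree with unique geodesics, needed for your component criterion and for $\overline{[x,y]}=[x,y]_{\overline{\mathcal{T}_\ff}}$. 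Neither is difficult, but both must be written out (or cited) for the proof to be complete; with them supplied, your argument is a valid alternative proof.
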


This result as well as the main idea of the proof is perhaps well-known to those working in the field, but the authors are not aware of an explicit example in the literature.
To prove Theorem~\ref{thm:mainthm} we construct a Cauchy sequence in $\mathcal{T}_\ff$ that does not converge in $\mathcal{T}_\ff^{\operatorname{sc}}$.
This sequence is contained in infinitely many different isometric copies of $\qq$ in $\mathcal{T}_\ff$.
Intuitively it can be thought of as a sequence with infinitely many branching points, see Figure \ref{fig:pic}.
A written note of Anne Parreau inspired the explicit Cauchy sequence not converging in $\mathcal{T}_\ff^{\operatorname{sc}}$.
The authors expanded on her example and proved that this sequence does not converge.

Affine $\Lambda$--buildings appear in \cite{BurgerIozziParreauPozzetti_PositiveCrossratiosBarycentersTrees} and \cite{BurgerIozziParreauPozzetti_RSCCharacterVarieties} in the study of degenerations of (higher rank) Teichm\"uller representations.
Their metric completions are $\textrm{CAT(0)}$  for a suitable metric, see \cite[Proposition 10]{BurgerIozziParreauPozzetti_RSCCharacterVarieties}.
A natural question is whether such completions are obtained by completing every apartment. In \cite{BruhatTits} Bruhat and Tits construct an affine building, called the \emph{Bruhat--Tits building}, given a reductive algebraic group and a valued field.
For $\operatorname{SL}_2(\kk)$, we expect the Bruhat--Tits building to be isometric to the segment completion tree associated to $\kk$.
Bruhat--Tits give a general characterisation of completeness of Bruhat--Tits buildings \cite[Theorem 7.5.3]{BruhatTits}.
In the case $\operatorname{SL}_2(\kk)$, their result implies that the Bruhat--Tits building is complete if and only if the field $\kk$ is \emph{spherically complete}, i.e.\ every decreasing sequence of balls in $\kk$ has non-empty intersection.
The field of real Puiseux series is not spherically complete, hence our result is in line with Bruhat--Tits' characterization assuming that the segment completion tree associated to $\kk$ is the affine building associated to $\operatorname{SL}_2(\kk)$.
The advantage of our approach is that the trees in question are more easily defined and we do not need the machinery built up in \cite{BruhatTits}.

After introducing the necessary background in Section \ref{section:prelim}, we prove Theorem \ref{thm:mainthm} in Section \ref{section:proofthm}.
We thank Anne Parreau for her support and for pointing us to the right chapter in \cite{BruhatTits}.
We are thankful to Marc Burger, Anne Parreau and Beatrice Pozzetti for constructive discussions and feedback.

\section{Preliminaries}\label{section:prelim}

An ordered field $\kk$ is called \emph{real closed} if every positive element is a square and every polynomial of odd degree has a root.
It is \emph{non-Archimedean} if there is an element that is larger than any $n \in \nn \subseteq \kk$.
For an ordered abelian group~$\Lambda$, a $\Lambda$--\emph{valuation} of $\kk$ is a map $v \colon \kk \to \Lambda \cup \{\infty\}$, that satisfies $v(a)=\infty$ if and only if $a=0$, $v(ab) = v(a)+v(b)$ and $v(a+b) \geq \min\{ v(a), v(b) \}$ with equality if $v(a) \neq v(b)$, for all $a, b\in \kk$.
It is \emph{order compatible} if for $a \leq b$ one has $\nu (b) \leq \nu (a)$.
For the remainder of this text we work with the following valued real closed field $\ff$, where $\Lambda = \qq$.
The field of \emph{real Puiseux series} is the set
 \[
 \ff := \left\{  \sum_{k=-\infty}^{k_0} c_k X^{\frac{k}{m}} \, \Bigg| \, k_0, m \in \mathbb{Z}, \, m > 0 , \, c_k \in \rr, \, c_{k_0}\neq 0\right\},
 \]
with the order such that $X$ is larger than any real number.
The real Puiseux series form a non-Archimedean, real closed extension of the ordered field of rational functions in one variable $\rr (X)$ with the compatible order \cite[Example 1.3.6.b)]{BochnakCosteRoy_RealAlgebraicGeometry}.
The \emph{logarithm}
\[\log \colon  \ff_{>0} \to \qq, \quad \sum_{k=-\infty}^{k_0} c_k X^{\frac{k}{m}}  \mapsto \tfrac{k_0}{m} \]
is order preserving and $-\log | \cdot |$ is an order compatible  $\qq$--valuation of $\ff$ by setting $\log(0):=-\infty$. In particular we have for all $a,b \in \ff_{>0}$, $\log(ab) = \log(a) + \log(b)$ and $\log(a+b) \leq \max \{\log(a), \log(b)\}$ with equality if $\log(a) \neq \log(b)$.
Following the construction in \cite{Brumfiel_TreeNonArchimedeanHyperbolicPlane}, we define the \emph{non-Archimedean hyperbolic plane over $\mathbb{F}$} as the set 
\[\mathbb{H}_\mathbb{F} := \{x+iy \mid x,y \in \mathbb{F}, y >0 \} \subseteq \ff[i], \]
where $i$ is such that $i^2 =-1$. Mimicking the real case, there is a pseudo-distance $d$ defined on $\hh_\ff$ as follows.
Given $z=x+iy$ and $z'=x'+iy'$ in $\mathbb{H}_\mathbb{F}$, consider the unique \emph{$\ff$--line} passing through them.
That is, either the vertical ray going through these two points if $x=x'$, or the half-circle through $z$ and $z'$ that meets $\{y=0\}$ in a right angle.
Denote the endpoints of the $\ff$--line by $w$ and $w'$ such that $w,z,z',w'$ appear in this order on the $\ff$--line, see Figure ~\ref{fig:crossratio}.
\begin{figure}[t]
\centering
\includegraphics[scale=0.7]{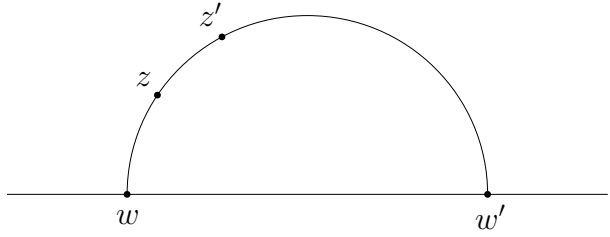}
\caption{Two points $z$ and $z'$ in $\hh_\ff$ determine a quadruple $(w,z,z',w')$, to which a cross-ratio $\textrm{CR}(w,z,z',w') \in \ff_{\geq1}$ can be associated.}
\label{fig:crossratio}
\end{figure}
Since $\ff$ is real closed, we can define the cross-ratio $\mathrm{CR}$ on $\hh_\ff$ analogously as for the real hyperbolic plane.
Brumfiel \cite[Equation (18)]{Brumfiel_TreeNonArchimedeanHyperbolicPlane} showed that
\[d(z,z') := \log \textrm{CR}(w,z,z',w') \in \rr_{\geq 0}\]
is a pseudo-distance on $\hh_\ff$. Note that two points can have distance zero because $\log$ is not injective. By the properties of $\log$, we obtain the following description of the pseudo-distance, which we use from now on, see \cite[Equation (19)]{Brumfiel_TreeNonArchimedeanHyperbolicPlane}.

\begin{remark} \label{rem:comp_dist}
Let $z= x + iy$ and $z' = x' + i y'$ be points in $\hh_\ff$, then we have
\begin{align*}
d(z, z') &= \log \left( \frac{{(x - x')}^2 + y^2 + {y'}^2}{y y'} \right) \\
&= \max \left\{ \log \left(\frac{{(x - x')}^2}{y y'} \right) ,\, \log \left( \frac{y}{y'} \right),\, \log \left( \frac{y'}{y} \right) \right\}.
\end{align*}
\end{remark}
Consider the equivalence relation on $\hh_\ff$ which identifies $z$ with $z'$ if and only if $d(z,z') = 0$ and denote by $\pi$ the projection
\[
\pi: \hh_\ff \to \hh_\ff \big/ \{d=0\} =: \mathcal{T}_\ff.
\]
In \cite[Theorem (28)]{Brumfiel_TreeNonArchimedeanHyperbolicPlane}, Brumfiel showed that $\mathcal{T}_\ff$ is a $\Lambda$--tree, with $\Lambda = \qq$.
We now define $\Lambda$--trees following \cite{Chiswell_LambdaTrees}.
Let $\Lambda$ be an ordered abelian group. A set $X$ together with a function $d \colon X \times X \to \Lambda$ is a $\Lambda$--\emph{metric space} if $d$ is positive definite, symmetric and satisfies the triangle inequality.
The group $\Lambda$ is itself a $\Lambda$--metric space by defining $d(a,b) := |b-a| := \max\{a-b, b-a\}$.
A \emph{segment} $s$ is the image of a $\Lambda$--isometric embedding $ \varphi \colon [a,b]_\Lambda = \{{t \in \Lambda \colon } a \leq t \leq b\} \to X$ for some $a \leq b \in \Lambda$, and the set of \emph{endpoints of $s$} is $\{\varphi(a),\varphi(b)\}$.
We call the image of an isometric embedding of $\Lambda$ a \emph{$\Lambda$--geodesic}.
A $\Lambda$--\emph{tree} is a $\Lambda$--metric space satisfying the following three properties:
\begin{enumerate}[label=(\alph*)]
\item \label{defLambdaTree:axiom1}
For all $x,y \in X$ there is a segment $s$ with endpoints $x,y$.
\item \label{defLambdaTree:axiom2}
For all segments $s,s'$ whose intersection $s \cap s' = \{x\}$ consists of one common endpoint $x$ of both segments, the union $s \cup s'$ is a segment.
\item \label{defLambdaTree:axiom3} 
For all segments $s,s'$ with a common endpoint $x$, the intersection $s\cap s'$ is a segment with $x$ as one of its endpoints.
\end{enumerate}
By \cite[Lemma II.1.1]{Chiswell_LambdaTrees}, segments in a $\Lambda$--tree are unique.
We write $[x,y]$ for the unique segment with endpoints $x,y \in X$.
We use the following statement later.

\begin{lemma}\label{lem:tree}
Let $X$ be a $\Lambda$--tree and $x,y,z \in X$. Then $[y,z] \subseteq [x,y] \cup [x,z]$.
\end{lemma}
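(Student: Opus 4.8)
The plan is to route everything through the \emph{branch point} of the three segments. First I would apply axiom~\ref{defLambdaTree:axiom3} to the pair $[x,y]$, $[x,z]$, which share the endpoint $x$: their intersection is a segment having $x$ as one of its endpoints, so it has the form $[x,w]$ for a (unique) point $w$, where possibly $w=x$.

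The next step is to split $[x,y]$ and $[x,z]$ at $w$. By definition a segment is the image of an isometric embedding $\varphi\colon[a,b]_\Lambda\to X$, and such a $\varphi$ is injective since the $\Lambda$--metric is positive definite. Parametrising $[x,y]$ by $\varphi$ with $\varphi(a)=x$ and $\varphi(b)=y$, let $c$ be the parameter with $\varphi(c)=w$. Restricting $\varphi$ to $[a,c]_\Lambda$ and to $[c,b]_\Lambda$ and invoking uniqueness of segments (\cite[Lemma II.1.1]{Chiswell_LambdaTrees}) yields $[x,y]=[x,w]\cup[w,y]$ with $[x,w]\cap[w,y]=\{w\}$, and in particular $[w,y]\subseteq[x,y]$; symmetrically $[x,z]=[x,w]\cup[w,z]$ with $[x,w]\cap[w,z]=\{w\}$ and $[w,z]\subseteq[x,z]$.

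Now I would observe that $[w,y]\cap[w,z]=\{w\}$: this intersection is contained in $[x,y]\cap[x,z]=[x,w]$, hence in $[x,w]\cap[w,y]=\{w\}$, and $w$ clearly lies in both segments. Thus $[w,y]$ and $[w,z]$ are two segments meeting in the single common endpoint $w$, so axiom~\ref{defLambdaTree:axiom2} tells us that $s:=[w,y]\cup[w,z]$ is a segment. Since $y,z\in s$ and a segment contains the (unique, by \cite[Lemma II.1.1]{Chiswell_LambdaTrees}) segment between any two of its points — once more obtained by restricting its parametrising interval — we conclude $[y,z]\subseteq s=[w,y]\cup[w,z]\subseteq[x,y]\cup[x,z]$.

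I do not expect a substantial obstacle: the content is entirely in axioms~\ref{defLambdaTree:axiom2} and~\ref{defLambdaTree:axiom3}. The step requiring the most care is the bookkeeping that the restricted subintervals really parametrise the named segments $[x,w]$, $[w,y]$, $[w,z]$ (this is where uniqueness of segments is used repeatedly), together with noting that the degenerate cases $w\in\{x,y,z\}$ merely collapse some of these segments to a point without affecting the argument.
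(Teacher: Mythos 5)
Your proposal is correct and follows the same route as the paper: apply axiom~\ref{defLambdaTree:axiom3} to $[x,y]$ and $[x,z]$ to produce the branch point, observe that the two remaining halves meet only in that point, and apply axiom~\ref{defLambdaTree:axiom2} together with uniqueness of segments. You simply spell out in more detail the verification that $[w,y]\cap[w,z]=\{w\}$, which the paper states without elaboration.
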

\begin{proof}
By axiom \ref{defLambdaTree:axiom3} of $\Lambda$--trees there is a point $r \in X$ with $[x,r] = [x,y]\cap [x,z]$.
Since then $[r,y]\cap [r,z] = \{r\}$, we have by axiom \ref{defLambdaTree:axiom2} (and uniqueness of segments) that $[y,z] = [y,r]\cup [r,z] \subseteq [x,y]\cup [x,z]$.
\end{proof}

\section{Proof of Theorem \ref{thm:mainthm}}\label{section:proofthm}

This section is concerned with the proof of Theorem \ref{thm:mainthm}, which states that the segment completion tree $\mathcal{T}_\ff^{\operatorname{sc}}$ associated to the field of real Puiseux series $\ff$, is not metrically complete.
We construct an explicit Cauchy sequence $(\pi(p_n))_{n \in \nn}$ in $\mathcal{T}_\ff$, whose isometric embedding does not converge in $\mathcal{B}\mathcal{T}_\ff$.

Let us first define a sequence of rational exponents $t_n$ to define points $p_n := a_n + i b_n \in \hh_\ff$ whose projections $\pi(p_n) \in \mathcal{T}_\ff$ will form the Cauchy sequence that does not converge.
Let $t_0:= 0 \in \qq$, $a_0:=0$, $b_0:=1 \in \ff$ and
for $n\geq 1$ define
\begin{align*}
   t_n &:= \sum_{k=1}^n -\frac{1}{2^k} = -\frac{2^n-1}{2^n} = -1+\frac{1}{2^n} \in \qq,\\ 
a_n &:= \sum_{k=1}^n X^{t_k} = X^{t_1} + X^{t_2} + \cdots + X^{t_n} \in \ff_{>0}, \\
b_n &:= X^{t_n} \in \ff_{>0}.
\end{align*}
We note that $t_n$ is a monotonically decreasing sequence converging to $-1$, $a_n$ is monotonically increasing and $b_n$ is monotonically decreasing.
Figure \ref{fig:sequenceinupperhalfplane} contains a schematic picture of the sequence $(p_n=a_n+ib_n)_{n \in \nn}$ in $\hh_\ff$.

\begin{figure}[h]
\centering
\includegraphics[scale=1]{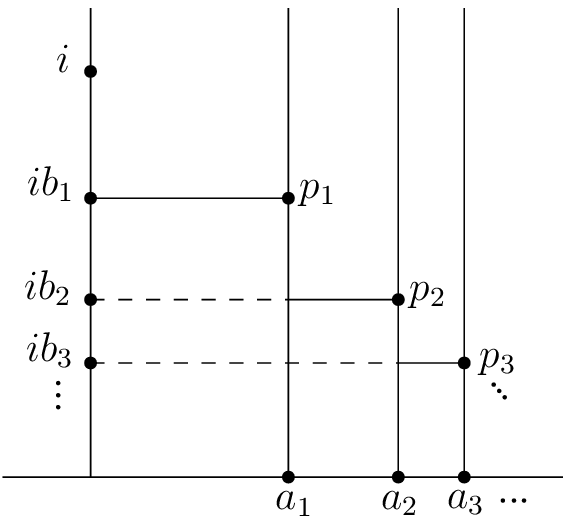}
\caption{The sequence $(p_n)_{n \in \nn}$ in the upper half plane $\hh_\ff$.} 
\label{fig:sequenceinupperhalfplane}
\end{figure}

\begin{lemma} \label{lem:cauchy}
 The sequence $(p_n)_{n\in \mathbb{N}}$ is a Cauchy sequence in $\hh_\ff$. 
\end{lemma}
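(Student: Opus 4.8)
The plan is to bound the pseudo-distance $d(p_m,p_n)$ explicitly and show that this bound tends to $0$. Since $d$ takes values in $\rr_{\geq 0}$, the Cauchy condition here is just the ordinary one for real sequences, so it suffices to produce a quantitative estimate; I would aim for $d(p_m,p_n) < 2^{-\min\{m,n\}}$, from which the claim is immediate.

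Fix $1\le n<m$ (the case $m=n$ is trivial, and only large indices matter for the Cauchy property). By Remark~\ref{rem:comp_dist}, $d(p_m,p_n)$ equals the maximum of the three quantities $\log\big((a_m-a_n)^2/(b_mb_n)\big)$, $\log(b_m/b_n)$ and $\log(b_n/b_m)$. The one computational input that needs care is $\log(a_m-a_n)$: since $a_m-a_n=\sum_{k=n+1}^m X^{t_k}$ with distinct exponents $t_{n+1}>t_{n+2}>\dots>t_m$, the stated property of $\log$ (namely equality in $\log(x+y)\le\max\{\log x,\log y\}$ when the two logs differ) gives, by induction on the number of summands, $\log(a_m-a_n)=t_{n+1}$ — there is no cancellation of leading terms because all coefficients are $+1$. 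Using $t_k=-1+2^{-k}$ together with $\log b_k=t_k$, the three quantities evaluate to $2t_{n+1}-t_m-t_n=-2^{-m}$, to $t_m-t_n=2^{-m}-2^{-n}$, and to $t_n-t_m=2^{-n}-2^{-m}$. The last of these is the largest, so $d(p_m,p_n)=2^{-n}-2^{-m}<2^{-n}$.

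It then follows that for any $\varepsilon>0$, choosing $N\in\nn$ with $2^{-N}<\varepsilon$ yields $d(p_m,p_n)<2^{-\min\{m,n\}}\le 2^{-N}<\varepsilon$ for all $m,n\ge N$, so $(p_n)_{n\in\nn}$ is a Cauchy sequence in $\hh_\ff$. I do not anticipate a genuine obstacle: the only subtlety is confirming that the leading exponent of the tail $a_m-a_n$ is exactly $t_{n+1}$ (no cancellation occurs since the $t_k$ are pairwise distinct and the coefficients are all $1$), and then keeping track of the signs when selecting the maximum of the three terms in Remark~\ref{rem:comp_dist}.
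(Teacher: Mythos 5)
Your proof is correct and follows essentially the same route as the paper: both reduce to the distance formula of Remark~\ref{rem:comp_dist}, use that $\log(a_m-a_n)=t_{n+1}$ (no cancellation), and find that the dominant term is $t_n-t_m=2^{-n}-2^{-m}\to 0$. The only cosmetic difference is that you work with the explicit maximum of three terms while the paper evaluates the logarithm of the sum directly.
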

\begin{proof}
Let $n\leq m$.
Then $t_m \leq t_n$, and, using the distance formula from Remark \ref{rem:comp_dist}, we obtain
\begin{align*}
    d(p_n, p_{m}) &= \log \left( \frac{(a_m - a_n)^2 + b_n^2 + b_m^2 }{b_nb_m} \right)\\
    &  =\log \left( \left(\sum_{k=n+1}^m X^{t_k}\right)^2 + \left(X^{t_n}\right)^2 + \left(X^{t_m}\right)^2  \right) - \log \left( X^{t_n}X^{t_m} \right) \\
    & = 2 t_{n} - (t_n + t_m) = t_n - t_m \to 0 \quad \text{ (as } n, m \to \infty )
\end{align*}
and thus $(p_n)_{n\in \nn}$ is a Cauchy sequence.
\end{proof}

The following lemma is a general fact about the distance function in $\hh_\ff$. 
\begin{lemma} \label{lem:verticalraysidentified}
For every $x, x'$ in $\ff$, there exists $y \in \ff_{>0}$ such that 
\[d\left(x+iy, x'+iy\right) = 0.\]
Furthermore, for all $y \in \ff_{>0}$ with $d(x+iy, x'+iy) = 0$ and all $t\geq 0$, it holds that $d(x+i(y+t), x'+i(y+t)) = 0$.
\end{lemma}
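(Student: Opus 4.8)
The plan is to compute $d(x+iy, x'+iy)$ directly from the second formula in Remark~\ref{rem:comp_dist} and to read off exactly when it vanishes. Since the two imaginary parts coincide, the terms $\log(y/y')$ and $\log(y'/y)$ are both $\log 1 = 0$, so
\[
d(x+iy,\, x'+iy) = \max\left\{ \log\!\left(\frac{(x-x')^2}{y^2}\right),\; 0 \right\}.
\]
Hence $d(x+iy, x'+iy) = 0$ if and only if $\log\!\left(\tfrac{(x-x')^2}{y^2}\right) \le 0$, which, after using $\log(ab) = \log a + \log b$ on $\ff_{>0}$, is the condition $\log|x-x'| \le \log y$, with the convention $\log 0 = -\infty$ so that it holds automatically when $x = x'$.

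For the existence statement I would simply exhibit such a $y$: if $x = x'$ any $y \in \ff_{>0}$ works, and if $x \neq x'$ then $y := |x-x'| \in \ff_{>0}$ gives $\log|x-x'| = \log y$, hence $d(x+iy, x'+iy) = 0$.

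For the ``furthermore'' part, suppose $y \in \ff_{>0}$ satisfies $d(x+iy, x'+iy) = 0$, so $\log|x-x'| \le \log y$ by the computation above. Given $t \ge 0$, the element $y+t$ lies in $\ff_{>0}$ and satisfies $y+t \ge y$; since $\log$ is order preserving (see Section~\ref{section:prelim}), $\log(y+t) \ge \log y \ge \log|x-x'|$. Substituting $y+t$ for $y$ in the displayed formula yields $d\big(x+i(y+t),\, x'+i(y+t)\big) = \max\{\,2\log|x-x'| - 2\log(y+t),\, 0\,\} = 0$, as desired.

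The only mildly delicate point is bookkeeping around the non-injectivity of $\log$: one must argue with the inequality $\log|x-x'| \le \log y$ rather than with an identity $|x-x'| = y$, and treat the degenerate case $x = x'$ (where $|x-x'| = 0$ and $\log 0 = -\infty$) separately. The order-preserving property of $\log$ recorded in the preliminaries handles the monotonicity step in the second part, so I do not expect a genuine obstacle here — the lemma is essentially a direct unwinding of the distance formula.
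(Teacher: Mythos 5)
Your proof is correct and follows essentially the same route as the paper: compute the distance from Remark~\ref{rem:comp_dist}, choose $y$ with $\log y = \log|x-x'|$ (the paper takes $y = X^{\log|x-x'|}$, you take $y = |x-x'|$, which has the same logarithm), and use monotonicity of $\log$ for the second part. Your explicit treatment of the degenerate case $x = x'$ is a small point of extra care that the paper glosses over.
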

\begin{proof}
Define $y:= X^{\log|x-x'|}>0$ so that, using Remark \ref{rem:comp_dist}, we obtain
\[d(x+iy, x'+iy) = \max \left\{ \log \left(\left(x-x'\right)^2\right) - \log X^{2\log|x-x'|},\, 0\right\} = 0.\]
The second equality follows from $\log(y+t) \geq \log(y)$ and that hence the expression $\log ((x-x')^2) - \log (X^{2\log|x-x'|}+t)$ is negative.
\end{proof}

Intuitively Lemma  \ref{lem:verticalraysidentified} tells us that two vertical $\ff$--lines in $\hh_\ff$ are identified from some point on in $\mathcal{T}_\ff$, forming an infinite tripod.
In Lemma \ref{lem:sequenceinquotient}, we refine this statement for the sequence of vertical $\ff$--lines $\ell_n := a_n + i \Fp$.
The situation is illustrated in Figure \ref{fig:pic}.

\begin{figure}[h]
\centering
\includegraphics[scale=1]{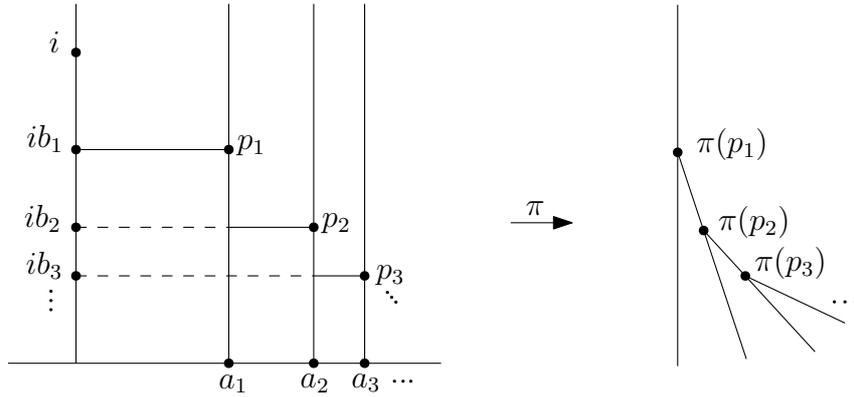}
\caption{The points $p_n \in \ell_n \subseteq \hh_\ff$ correspond to branching points $\pi(p_n)$ of $\pi(\ell_{n-1})$ and $\pi(\ell_n)$ in $\mathcal{T}_\ff$.} 
\label{fig:pic}
\end{figure}

\begin{lemma}\label{lem:sequenceinquotient}
Let $n\in \nn, \, b \in \Fp$. Consider the vertical $\ff$--lines $\ell_n$, $\ell_{n+1}$. 
\begin{enumerate}[label=(\roman*)]
\item \label{lem:sequenceinquotient:item:branchingpoints}
If $\log (b)  \geq \log( b_{n+1} ) = t_{n+1}$, then $d(a_n + ib ,a_{n+1}+ib)=0$.

\item If $\log(b) < \log(b_{n+1})={t_{n+1}} $, then all points in $\ell_{n+1}$ have non-zero distance to $a_n+ib$.
\end{enumerate}
\end{lemma}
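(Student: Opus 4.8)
The plan is to reduce both parts to the explicit distance formula of Remark~\ref{rem:comp_dist}, using the single observation that $a_{n+1} - a_n = X^{t_{n+1}}$, so that $\log|a_{n+1} - a_n| = t_{n+1} = \log(b_{n+1})$. Everything then follows by evaluating the maximum of three logarithmic terms and comparing them to $t_{n+1}$.

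For item~\ref{lem:sequenceinquotient:item:branchingpoints}, I would apply the second (maximum) form of Remark~\ref{rem:comp_dist} to $z = a_n + ib$ and $z' = a_{n+1} + ib$. Since these points have equal imaginary part $b$, the two terms $\log(b/b)$ vanish and the remaining term is $\log\bigl((a_{n+1}-a_n)^2/b^2\bigr) = 2t_{n+1} - 2\log(b)$. By hypothesis $\log(b) \geq t_{n+1}$, so this is $\leq 0$, whence the maximum equals $0$ and $d(a_n + ib, a_{n+1} + ib) = 0$. I would point out that this does not follow verbatim from Lemma~\ref{lem:verticalraysidentified}, since $\log$ is not injective and $\log(b) \geq t_{n+1}$ does not force $b \geq b_{n+1}$; the direct computation is the cleanest route.

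For the second item, fix $b \in \Fp$ with $\log(b) < t_{n+1}$ and let $c \in \Fp$ be arbitrary; the goal is $d(a_n + ib, a_{n+1} + ic) > 0$. Remark~\ref{rem:comp_dist} expresses this distance as the maximum of $2t_{n+1} - \log(b) - \log(c)$, of $\log(b/c)$, and of $\log(c/b)$. I would split on the sign of $\log(c) - \log(b)$: if $\log(c) > \log(b)$, the term $\log(c/b)$ is already strictly positive; if $\log(c) \leq \log(b)$, then $\log(c) \leq \log(b) < t_{n+1}$, so $2t_{n+1} - \log(b) - \log(c) > 0$. In either case the maximum is strictly positive, so no point of $\ell_{n+1}$ has zero distance to $a_n + ib$.

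The computations are short and there is no genuine obstacle; the only points requiring attention are that one should work throughout with the valuation formula of Remark~\ref{rem:comp_dist} rather than attempting to compare elements of $\ff$ directly (as $\log$ does not reflect the order), and that in the second item one must not overlook the branch $\log(c) > \log(b)$, where it is the "vertical" term $\log(c/b)$ rather than the "horizontal" one that keeps the distance positive.
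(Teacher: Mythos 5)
Your proof is correct, and both parts reduce, as in the paper, to the maximum formula of Remark~\ref{rem:comp_dist} together with the identity $a_{n+1}-a_n = X^{t_{n+1}}$. For item~\ref{lem:sequenceinquotient:item:branchingpoints} the difference is cosmetic: the paper treats $\log(b)=t_{n+1}$ by direct evaluation and then handles $\log(b)>t_{n+1}$ via the second part of Lemma~\ref{lem:verticalraysidentified} (which is legitimate there, since a \emph{strict} inequality of logarithms does imply $b>b_{n+1}$ by order preservation), whereas your single computation $\log\bigl((a_{n+1}-a_n)^2/b^2\bigr)=2t_{n+1}-2\log(b)\leq 0$ covers the whole range at once; your caveat about $\log$ not being injective is accurate but only rules out applying Lemma~\ref{lem:verticalraysidentified} in the boundary case, which the paper does not attempt. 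The genuine divergence is in the second item: the paper splits on $b'<b_{n+1}$ versus $b'\geq b_{n+1}$ and, in the latter case, leaves the distance formula entirely, invoking part~\ref{lem:sequenceinquotient:item:branchingpoints} to identify $a_{n+1}+ib'$ with $a_n+ib'$ and then using the triangle inequality $d(a_n+ib,a_{n+1}+ib')\geq d(a_n+ib,a_n+ib')>0$. You instead split on the sign of $\log(c)-\log(b)$ and in each branch exhibit one of the three terms of the maximum that is strictly positive. Your route is more uniform and purely computational, never leaving Remark~\ref{rem:comp_dist}; the paper's second case is the more geometric, tree-flavoured argument. Both are complete and correct.
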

\begin{proof}
%We use the first point to prove the second.
\noindent
\begin{itemize}[leftmargin=0.65cm]
  \item[(i)] If $\log(b)=\log(b_{n+1})$, we use $\log(a_{n+1}-a_n) = \log(b_{n+1})$ to see that
    \[
        \quad \quad d(a_n + ib , a_{n+1} + ib) = \max \left\{ \log \left( \frac{(a_{n+1}-a_n)^2}{b_{n+1}^2} \right) ,0\right\}=0.
    \]
    If $\log(b) > \log(b_{n+1})$, then also $b > b_{n+1}$ and the statement follows from the second part of Lemma \ref{lem:verticalraysidentified}.
    \item[(ii)] Let $a_{n+1}+ib'$ be a point in $\ell_{n+1}$ for some $b' \in \Fp$.
First, assume $b'  < b_{n+1} = X^{t_{n+1}}$, in which case $\log (bb') < 2t_{n+1}$ holds.
Then
    \begin{align*}
        d(a_n+ib,a_{n+1}+ib') &= \log \left(\frac{\left(a_{n+1}-a_n\right)^2+b^2+{b'}^2}{b b'}\right) \\
        &= \log \left( X^{2t_{n+1}} + b^2 + {b'}^2\right) - \log (b b') \\
        &= \log \left( X^{2t_{n+1}} \right) - \log (b b') >0.
    \end{align*}
    Second, assume $b' \geq b_{n+1} = X^{t_{n+1}}$.
    We use \ref{lem:sequenceinquotient:item:branchingpoints} to see that $d({a_n+ib'}, \allowbreak a_{n+1}+ib') = 0$.
    Then, using $d(a_n+ib, a_n + ib') = \log( b' ) - \log(b) > 0$, we conclude
    \begin{align*}
        \quad\quad d\left(a_n+ib, a_{n+1} + ib'\right) &= d\left(a_n+ib, a_{n+1}+ib'\right) + d\left(a_n+ ib', a_{n+1}+ib'\right) \\
        & \geq d\left(a_n+ib,a_n+ib'\right) > 0.
    \end{align*}
\end{itemize}
\end{proof}

Lemma \ref{lem:sequenceinquotient} implies that for all $n \in \nn$ the $\qq$--geodesics $\pi(\ell_n)$ and $\pi(\ell_{n+1})$ in $\mathcal{T}_\ff$ branch off at the point $\pi(p_{n+1})$.
By Lemma \ref{lem:cauchy}, $(p_n)_{n\in \nn}$ and hence also $(\pi(p_n))_{n \in \nn}$ are Cauchy sequences.
The next proposition shows that $(\pi(p_n))_{n \in \nn}$ does not converge in the $\mathbb{R}$--tree $\mathcal{T}_\ff^{\operatorname{sc}}$.
Intuitively, the sequence $(\pi(p_n))_{n\in \nn}$ does not stay in $\pi(\ell_m)$ for any $m\in \nn$, and hence the limit point is not contained in the completion of any of the $\qq$--geodesics $\pi(\ell_m)$.
In fact, the following proposition shows that it is not contained in the completion of any $\qq$--geodesic in $\mathcal{T}_\ff$. The sequence $(\pi(p_n))_{n\in \nn} \subseteq \mathcal{T}_\ff$ is a Cauchy sequence by Lemma \ref{lem:cauchy} and thus Theorem \ref{thm:mainthm} follows from the following proposition.

\begin{prop}\label{prop:main}
The sequence $(\pi(p_n))_{n\in \mathbb{N}}$ does not converge in $\mathcal{T}_\ff^{\operatorname{sc}}$.
\end{prop}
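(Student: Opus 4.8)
The plan is to argue by contradiction: assuming $(\pi(p_n))_{n\in\nn}$ converges to some $\xi\in\mathcal{T}_\ff^{\operatorname{sc}}$, I would first show by a soft $\rr$--tree argument that $\xi$ must in fact lie in $\mathcal{T}_\ff$, and then reach a contradiction by a computation in $\ff$ showing that $(\pi(p_n))_{n\in\nn}$ does not converge in $\mathcal{T}_\ff$. For the first step, observe that by definition of $\mathcal{T}_\ff^{\operatorname{sc}}$ there is a segment of $\mathcal{T}_\ff$ with endpoints $u,v\in\mathcal{T}_\ff$ and $\xi\in\overline{[u,v]}$; since $\mathcal{T}_\ff^{\operatorname{sc}}$ is an $\rr$--tree, $\overline{[u,v]}$ is its segment from $u$ to $v$, and Lemma~\ref{lem:tree}, applied in $\mathcal{T}_\ff^{\operatorname{sc}}$ to the points $\pi(p_0),u,v$, gives $\xi\in[\pi(p_0),u]\cup[\pi(p_0),v]$; say $\xi\in[\pi(p_0),u]$. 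This $\rr$--tree segment is the metric completion of the $\qq$--segment of $\mathcal{T}_\ff$ joining $\pi(p_0)$ and $u$, hence isometric to $[0,\ell]_\rr$ with $\ell=d(\pi(p_0),u)\in\qq$, the $\qq$--segment corresponding to $[0,\ell]_\qq$; under this identification $\xi$ sits at parameter $d(\pi(p_0),\xi)=\lim_n d(\pi(p_0),\pi(p_n))$. Since $\pi$ preserves distances, the computation in the proof of Lemma~\ref{lem:cauchy} applied to the pair $(p_0,p_n)$ gives $d(\pi(p_0),\pi(p_n))=t_0-t_n=1-2^{-n}$, so this parameter equals $1\in\qq\cap[0,\ell]$. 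Hence $\xi$ coincides with the point of the $\qq$--segment at parameter $1$, and in particular $\xi\in\mathcal{T}_\ff$.

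For the second step I would suppose $\pi(p_n)\to\pi(q)$ for some $q=c+iy\in\hh_\ff$, so that $d(p_n,q)\to 0$, and extract constraints from Remark~\ref{rem:comp_dist}. Since $\log b_n=t_n\to-1$, the bound $d(p_n,q)\ge\abs{\log(y/b_n)}=\abs{t_n-\log y}$ forces $\log y=-1$, and then $d(p_n,q)\ge 2\log\abs{a_n-c}-t_n+1$ forces $\limsup_n\log\abs{a_n-c}\le-1$. The point is that this last inequality holds for no $c\in\ff$: writing $c=\sum_{j\le j_0}c_jX^{j/N}$ with fixed $N\in\nn$ and choosing $k_0$ minimal with $2^{k_0}\nmid N$, the exponent $t_{k_0}=-1+2^{-k_0}$ has, in lowest terms, denominator exactly $2^{k_0}$, hence is not of the form $j/N$; so the monomial $X^{t_{k_0}}$ does not occur in $c$, while it occurs with coefficient $1$ in $a_n$ for every $n\ge k_0$ and hence with coefficient $1$ in $a_n-c$. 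Therefore $\log\abs{a_n-c}\ge t_{k_0}>-1$ for all $n\ge k_0$, so $\liminf_n\log\abs{a_n-c}\ge t_{k_0}>-1$, contradicting $\limsup_n\log\abs{a_n-c}\le-1$. Thus no such $q$ exists, contradicting the first step and proving the proposition.

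I expect the main obstacle to be the last claim of the second step — that no single Puiseux series $c$ can make $\limsup_n\log\abs{a_n-c}\le-1$ — which is exactly where the construction exploits that $\ff$ fails to be spherically complete: the exponents $t_k$ require denominators $2^k$ growing without bound, whereas any fixed element of $\ff$ has a bounded denominator and so cannot absorb the entire tail of the $a_n$. The first step, by contrast, should be essentially formal; the only care required there is to identify $\xi$ with a point of the $\qq$--segment, which works precisely because $\lim_n d(\pi(p_0),\pi(p_n))=1$ is rational.
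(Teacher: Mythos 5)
Your proof is correct, and its first half takes a genuinely different route from the paper's. Where the paper's Steps 1--2 reduce to a vertical line $\ell = a + i\ff_{>0}$ and then run a branching-point case analysis (Case 1 / Case 2) to show $\pi(p_n) = \pi(a + iX^{t_n})$, you exploit the rationality of $\lim_n d(\pi(p_0),\pi(p_n)) = 1$: since segments of the $\rr$--tree $\mathcal{T}_\ff^{\operatorname{sc}}$ between points of $\mathcal{T}_\ff$ are completions of $\qq$--segments, whose $\mathcal{T}_\ff$--points sit exactly at the rational parameters, the limit $\xi$ must be an honest point $\pi(q)$ of $\mathcal{T}_\ff$. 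This is shorter and cleaner, at the cost of (i) leaning on the assertion, stated but not proved in the paper, that $\mathcal{T}_\ff^{\operatorname{sc}}$ is an $\rr$--tree in which the completed $\qq$--segments are precisely the segments between points of $\mathcal{T}_\ff$, and (ii) using the special feature that $-\lim_n t_n = 1$ is rational --- the paper's branching argument would survive a perturbation of the exponents making this limit irrational, whereas yours would not. Your endgame (forcing $\log y = -1$, then $\limsup_n \log|a_n - c| \le -1$, then observing that the dyadic denominators of the $t_k$ outgrow any fixed $N$ so that $X^{t_{k_0}}$ survives in $a_n - c$ for all $n \ge k_0$) is the same non-spherical-completeness obstruction as the paper's Steps 3--4, phrased a bit more crisply via the first $k_0$ with $2^{k_0} \nmid N$; the edge cases ($a_n = c$ giving $\log 0 = -\infty$, the existence of such a $k_0$) all check out.
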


\begin{proof}

We assume by contradiction that $\pi(p_n)$ converges to some $\overline{p} \in \mathcal{T}_\ff^{\operatorname{sc}}$.
Since $\mathcal{T}_\ff^{\operatorname{sc}}$ is by definition the union of the completions of the segments of $\mathcal{T}_\ff$, we can find a segment $s$ in $\mathcal{T}_\mathbb{F}$, such that $\overline{p}$ lies in its completion $\overline{s}$.

\textbf{Step 1.} Our first goal is to show that we may assume that $s$ is contained in the image of a vertical $\ff$--line $\ell$ of $\hh_\ff$.
The segment $s$ has two endpoints in the tree $\mathcal{T}_{\ff}$.
Choose preimages $z_1=x_1+iy_1$ and $z_2=x_2+iy_2 \in \hh_{\ff}$ so that $\pi(z_1)$ and $\pi(z_2)$ are the endpoints of $s$. 
By Lemma \ref{lem:verticalraysidentified}, there is a large enough $y \in \ff_{>0}$ with $d\left(x_1+iy , x_2 + iy\right) = 0$, hence $\pi(x_1+iy)={\pi(x_2+iy)} \in \mathcal{T}_\ff$.
Lemma \ref{lem:tree} implies that $s = \left[\pi(z_1),\pi(z_2)\right] \subseteq \left[\pi(x_1+iy),\pi(z_1)\right]\cup \left[\pi(x_2+iy), \pi(z_2)\right]$.
We conclude that at least one of the completions of the segments $[\pi(z_1),\allowbreak {\pi(x_1+iy)}]$ or $[\pi(z_2),\pi(x_2+iy)]$ has to contain $\overline{p}$.
Without loss of generality $\overline{p}$ lies in the completion of $[\pi(z_1),\pi(x_1+iy)]$.
Set $a := x_1 \in \ff$ so that the vertical $\ff$--line $ \ell := a+i\ff_{>0}$ is such that $\overline{p}$ lies in the completion of $\pi(\ell)$.

\textbf{Step 2.} 
We define points 
$ q_n = a + iX^{t_n} \in \ell \subseteq \hh_\ff$ and claim that $d (p_n , q_n ) = 0$ for all $n \in \nn$, see Figure \ref{fig:p_to_q}.
To show this, fix $n$ and let $N\in \nn$ be large enough so that $d(\pi(p_N), \overline{p} ) < d(\pi(p_n) , \overline{p})$.
Consider the vertical $\ff$--line $\ell_N = a_N + i \ff_{>0} \subseteq~\hh_\ff$ that contains both $p_N$ and the point $p_n'=a_N+iX^{t_n}$.
By Lemma \ref{lem:sequenceinquotient} \ref{lem:sequenceinquotient:item:branchingpoints}, $d(p_n, p_n') = 0$ and hence $\pi (p_n) = \pi( p'_n) \in \pi(\ell_N)$.
By Lemma \ref{lem:verticalraysidentified} we can find points $p_{-1} = a_N+ib \in \ell_N$ and $q_{-1} = a+ib\in \ell$ with $d(p_{-1}, q_{-1}) = 0$.
We distinguish two cases, and show that in fact the second case cannot occur.

\vspace{2pt}
\begin{figure}[t]
%\centering
\includegraphics[scale=1]{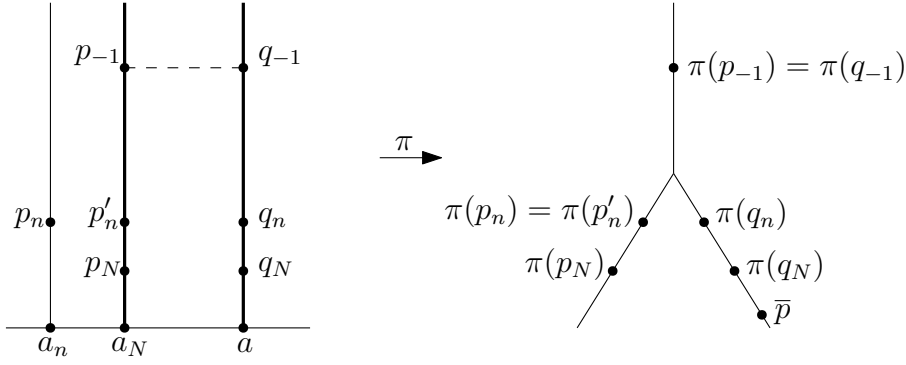}
\caption{The setup to prove $\pi(p_n) = \pi(q_n)$ in Step 2. The tripod on the right is the image under $\pi$ of the two vertical lines in bold on the left.}
\label{fig:p_to_q}
\end{figure}

Case 1: 
 $\pi(p_n) \in \pi(\ell)$.
 Then there exists $y \in \ff_{>0}$ such that $\pi(a+iy) = \pi(p_n)  \in \pi(\ell)$.
 But this means nothing else than $ d( p_n , a+iy)=0$.
 We use $p_n = a_n + i X ^{t_n}$ and Remark \ref{rem:comp_dist} to get 
\[
0 = d( p_n , a+iy)  = \max \left\{ \log \left(\frac{(a-a_n)^2}{X^{t_n}y} \right), {t_n} - \log(y) , \log (y) - {t_n} \right\}.
\]
Hence $\log y = {t_n}$ so that we conclude
\[
d(p_n, q_n) =\max \left\{ \log \left( \frac{(a-a_n)^2}{X^{2t_n} }\right) , 0 \right\}=  d(p_n, a+ iy) =   0.
\]

Case 2: $\pi(p_n)\in \pi(\ell_N) \setminus \pi(\ell)$.
The goal is to show that in fact this case cannot occur.
Observe that $\pi(p_N) \in \pi(\ell_N) \setminus \pi(\ell)$, since otherwise $\pi(p_N)= \pi(q_N)$ as in Case 1, and the second part of Lemma \ref{lem:verticalraysidentified} would then imply that $\pi(p_n)=\pi(q_n) \in \pi (\ell)$.
By Lemma \ref{lem:tree} we have for every $r \in \pi(\ell)$, $[\pi(p_N),\pi(p_{-1})] \subseteq [r,\pi(p_N)]\cup [r,\pi(p_{-1})]$.
Since $\pi(p_n) \in [\pi(p_N),\pi(p_{-1})]\setminus \pi(\ell)$ and $[r,\pi(p_{-1})] \subseteq \pi(\ell)$, we have $\pi(p_n) \in [\pi(p_N), r]$ and hence 
\[
d(\pi(p_N),\pi(p_n)) + d(\pi(p_n),r) =  d(\pi(p_N),r).
\]
By density of $\qq$ in $\rr$, choose $r \in \pi(\ell)$ close to $\overline{p}$ in the sense that $d(r,\overline{p}) < d (p_n, p_N)$.
Without loss of generality we may assume that $ d(\pi(p_N),r) < d(\pi(p_N),\overline{p})$.
We then have
\begin{align*}
d(\pi(p_n),\overline{p}) &\leq d(\pi(p_n),r) + d(r,\overline{p}) < d(\pi(p_n), r) + d(\pi(p_n),\pi(p_N)) \\ 
    &= d(\pi(p_N),r) < d(\pi(p_N),\overline{p}) < d(\pi(p_n), \overline{p}),
\end{align*}
where the last inequality comes from the choice of $N$.
This is a contradiction.

\textbf{Step 3.} Next we would like to get a bound on $\log \left|a-a_n\right|$. 
We use Remark~\ref{rem:comp_dist} to obtain
\begin{align*}
    0 & = d(p_n, q_n) = \max \left\{ \log \left(\frac{(a-a_n)^2}{X^{2t_n}}\right) , 0 \right\} \\
    & = \max \left\{ 2\left( \log \left|a-a_n\right| -t_n \right) ,0 \right\} ,
\end{align*}
which implies $\log | a-a_n| \leq t_n$.

\textbf{Step 4.}
Recall that $a_n = \sum_{j=1}^n X^{t_j}$ and $t_j=(1-2^j)/2^j$ is monotonically decreasing in $j$.
By definition of $\ff$, there exist $m\in \mathbb{N}$, $k_0\in \mathbb{Z}$ and $c_k \in \rr$ such that $a \in \ff$ can be written as
\[
a = \sum_{k=-\infty }^{k_0} c_k X^{\frac k m}.
\]
Let us also write for every $n \in \nn$
\[
a - a_n = \sum_{k=-\infty}^{k_{0,n}} d_{k,n} X ^{\frac{k}{m_n}},
\]
for $m_n \in \nn$, $k_{0,n} \in \zz$ and $d_{k,n} \in \rr$, such that $k_{0,n} := \max \{ k  \in \zz \colon d_{k, n} \neq 0 \}$. 
From Step 3 we obtain that
\[
\frac{k_{0,n}}{m_n}= \log\left|a - a_n\right| \leq t_{n}.
\]
Thus for every $n\in \nn$ we have 
\[
a - a_n  = \sum_{k=-\infty }^{k_0} c_k X^{\frac k m} - \sum_{j = 1}^n X^{t_j} =  \sum_{k = -\infty}^{k_{0,n}} d_{k, n} X^{\frac{k}{m_n}},
\]
where $k_{0,n}/m_n$ is less or equal than $t_n$.
Since the sequence $(t_n)_{n \in \nn}$ is monotonically decreasing, it means that the series expansion for $a$ contains the terms $X^{t_j}$ for all $j \in \{1,\ldots, n-1 \}$ which cancel the terms $X^{t_j}$ in $a_n$.
In other words $m t_j\in \zz$, which is equivalent to $m/2^j \in \zz$ for all $j \in \{1,\ldots, n-1\}$ by definition of $t_j$.
As this holds for every $n\in \nn$, we obtain a contradiction, since there is no $m\in \zz$ such that $m/2^j\in \zz$ for all $j \in \nn$.
\end{proof}

%\tableofcontents

\bibliographystyle{amsalpha}
\bibliography{main}

\providecommand{\bysame}{\leavevmode\hbox to3em{\hrulefill}\thinspace}
\providecommand{\MR}{\relax\ifhmode\unskip\space\fi MR }
% \MRhref is called by the amsart/book/proc definition of \MR.
\providecommand{\MRhref}[2]{%
  \href{http://www.ams.org/mathscinet-getitem?mr=#1}{#2}
}
\providecommand{\href}[2]{#2}
\begin{thebibliography}{BIPP21b}

\bibitem[BCR98]{BochnakCosteRoy_RealAlgebraicGeometry}
J.~Bochnak, M.~Coste, and M.-F. Roy, \emph{Real algebraic geometry}, Ergebnisse
  der Mathematik und ihrer Grenzgebiete (3) [Results in Mathematics and Related
  Areas (3)], vol.~36, Springer-Verlag, Berlin, 1998, Translated from the 1987
  French original, Revised by the authors. \MR{1659509}

\bibitem[BIPP21a]{BurgerIozziParreauPozzetti_PositiveCrossratiosBarycentersTrees}
M.~Burger, A.~Iozzi, A.~Parreau, and M.~B. Pozzetti, \emph{Positive
  crossratios, barycenters, trees and applications to maximal representations},
  2021.

\bibitem[BIPP21b]{BurgerIozziParreauPozzetti_RSCCharacterVarieties}
\bysame, \emph{The real spectrum compactification of character varieties:
  characterizations and applications}, C. R. Math. Acad. Sci. Paris
  \textbf{359} (2021), 439--463. \MR{4278899}

\bibitem[Bru88a]{Brumfiel_RSCTeichmuellerSpace}
G.~W. Brumfiel, \emph{The real spectrum compactification of {T}eichm\"{u}ller
  space}, Geometry of group representations ({B}oulder, {CO}, 1987), Contemp.
  Math., vol.~74, Amer. Math. Soc., Providence, RI, 1988, pp.~51--75.
  \MR{957511}

\bibitem[Bru88b]{Brumfiel_TreeNonArchimedeanHyperbolicPlane}
\bysame, \emph{The tree of a non-{A}rchimedean hyperbolic plane}, Geometry of
  group representations ({B}oulder, {CO}, 1987), Contemp. Math., vol.~74, Amer.
  Math. Soc., Providence, RI, 1988, pp.~83--106. \MR{957513}

\bibitem[BT72]{BruhatTits}
F.~Bruhat and J.~Tits, \emph{Groupes r\'{e}ductifs sur un corps local}, Inst.
  Hautes \'{E}tudes Sci. Publ. Math. (1972), no.~41, 5--251. \MR{327923}

\bibitem[Chi01]{Chiswell_LambdaTrees}
I.~Chiswell, \emph{Introduction to {$\Lambda$}-trees}, World Scientific
  Publishing Co., Inc., River Edge, NJ, 2001. \MR{1851337}

\bibitem[CMSP08]{ChiswellMuellerSchlagePuchta_CompactnessRealTrees}
I.~Chiswell, T.~W. M\"{u}ller, and J.-C. Schlage-Puchta, \emph{Compactness and
  local compactness for {$\Bbb R$}-trees}, Arch. Math. (Basel) \textbf{91}
  (2008), no.~4, 372--378. \MR{2447552}

\bibitem[MS84]{MorganShalen_ValuationsTressDegenerationsHyperbolicStructuresI}
J.~W. Morgan and P.~B. Shalen, \emph{Valuations, trees, and degenerations of
  hyperbolic structures. {I}}, Ann. of Math. (2) \textbf{120} (1984), no.~3,
  401--476. \MR{769158}

\bibitem[MS91]{MorganShalen_FreeActionsRealTrees}
\bysame, \emph{Free actions of surface groups on {${\bf R}$}-trees}, Topology
  \textbf{30} (1991), no.~2, 143--154. \MR{1098910}

\end{thebibliography}

\end{document}